\newcommand{\R}{\mathbb{R}}
\DeclareMathOperator*{\dvg}{\mbox{div}}
\newtheorem{theorem}{Theorem}[section]
\newtheorem{proposition}{Proposition}[section]
\newtheorem{lemma}{Lemma}
\theoremstyle{remark}
\newtheorem{remark}{Remark}
\theoremstyle{definition}
\begin{document}
\title{A Local Energy Identity for Parabolic Equations with Divergence-Free Drift}

\author{Francis Hounkpe\footnote{Email adress: \texttt{hounkpe@maths.ox.ac.uk};}}
\affil{Mathematical Institute, University of Oxford, Oxford, UK}

\renewcommand\Affilfont{\itshape\large}

\maketitle
\begin{abstract}
We prove a local energy identity for a class of distributional solutions, in $L_{2,\infty} \cap W^{1,0}_2$, of parabolic equations with divergence-free drift. 
\end{abstract}

\section{Introduction}

We are considering the parabolic equations of the type
\[
\partial_t u - \dvg(a\nabla u) + b.\nabla u = 0, 
\]
where $a$ is a bounded, symmetric and uniformly elliptic matrix and $b$ a divergence-free vector field belonging to $L_{\infty}(BMO^{-1})$. We say that a divergence-free vector field $b$ belongs to the space $BMO^{-1}$ if there exits a skew symmetric matrix $d$ belonging to $BMO$ such that $b = \dvg(d)$. Therefore, the above equation can be rewritten as follows: 
\begin{equation}\label{E1}
\partial_t u - \dvg(A\nabla u) = 0,
\end{equation}
where $A= a + d$, with $a$ as before and $d \in L_{\infty}(BMO)$ a skew symmetric matrix.

G. Seregin and co-authors introduced, in their paper \cite{Ser12}, the notion of \textit{suitable weak solutions} to equation~\eqref{E1}, which are distributional solutions that belong to the energy class $L_{2,\infty} \cap W^{1,0}_2$ and that satisfy a particular local energy inequality. In this paper, we establish a local energy identity for distributional solutions of \eqref{E1} which belong to the energy class $L_{2,\infty} \cap W^{1,0}_2$, and therefore, we prove at the same time that the local energy inequality required in the definition of \textit{suitable weak solutions}, introduced in \cite{Ser12}, is a direct consequence of being a distributional solution in the above energy class.

\section{Preliminaries}
In what follows, we will use the following abbreviated notations:
$B := B(0,1)$ (the unit ball of $\R ^n$), $Q := B \times (-1,0)$, as well as $z:= (x,t)$.

We recall that a function $d$ is in the space $BMO(\Omega;\R ^{n\times n})$ if the following quantity 
\[
\sup\left\{ \frac{1}{|B(0,r)|}\int_{B(x_0,r)} |d-[d]_{x_0,r}| dx : B(x_0,r) \subset\subset \Omega \right\},
\]
with $[d]_{x_0,r}$ the average of $d$ over $B(x_0,r)$, is bounded; and a function $u$ belongs to the Hardy space $\mathcal{H}^1(\R^n)$ if there exists a function $\phi \in C^{\infty}_0(B)$ such that
\[
u_{\phi} \in L_1(\R^n),
\]
where $u_{\phi}(x) := \sup_{t>0} |(\phi_t\star u)(x)|$,
and $\phi_t(x) := t^{-n}\phi(x/t)$.\\
For simplicity we adopt the following notation convention $\partial_i f = {f,}_i$. We have the following classical div-curl type lemma for Hardy spaces, which is a direct consequence of Theorem II.1 in \cite{Coif93}.
\begin{lemma}\label{L1}
Let $u \in W^1_p(\R^n)$ and $v\in W^1_q(\R^n)$, with $1<p<\infty$ and $1/p + 1/q=1$. Then 
$u,_{j} v,_{i} - v,_{j} u,_{i} \in \mathcal{H}^1(\R^n)$ for all $i,j = 1,\ldots,n$ and we have 
\[
\|u,_{j} v,_{i} - v,_{j} u,_{i}\|_{\mathcal{H}^1} \leq C \|\nabla u\|_{L_p} \|\nabla v\|_{L_q}, \quad \forall i,j=1,\ldots,n.
\]
\end{lemma}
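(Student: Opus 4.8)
The plan is to deduce the lemma directly from the div-curl theorem of Coifman, Lions, Meyer and Semmes (Theorem II.1 of \cite{Coif93}): if $E\in L_s(\R^n;\R^n)$ is divergence-free and $F\in L_{s'}(\R^n;\R^n)$ is curl-free, with $1<s<\infty$ and $1/s+1/s'=1$, then $E\cdot F\in\mathcal{H}^1(\R^n)$ and $\|E\cdot F\|_{\mathcal{H}^1}\le C\|E\|_{L_s}\|F\|_{L_{s'}}$. It therefore suffices to realise the combination $u,_{j} v,_{i}-v,_{j} u,_{i}$ as one such scalar product, with the divergence-free factor built from $\nabla v$ and the curl-free factor equal to $\nabla u$.

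Fix indices $i,j$; the case $i=j$ is trivial, since the expression then vanishes identically, so assume $i\ne j$. First I would set $F:=\nabla u$, which is curl-free, being a gradient, and which lies in $L_p(\R^n;\R^n)$ because $u\in W^1_p(\R^n)$. For the other factor I would take $E$ to be the vector field whose $j$-th component is $v,_{i}$, whose $i$-th component is $-v,_{j}$, and whose remaining components vanish. Since $|E|\le|\nabla v|$ pointwise, we have $E\in L_q(\R^n;\R^n)$ with $\|E\|_{L_q}\le\|\nabla v\|_{L_q}$, and a one-line computation gives
\[
E\cdot F = v,_{i}\,u,_{j} - v,_{j}\,u,_{i} = u,_{j} v,_{i} - v,_{j} u,_{i}.
\]

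It then remains to verify that $E$ is divergence-free in the sense of distributions. As its only nonzero components are $E_j=v,_{i}$ and $E_i=-v,_{j}$, one computes $\dvg E=v,_{ij}-v,_{ji}=0$, the interchange of mixed second derivatives being legitimate at the level of distributions. With $F\in L_p$ curl-free and $E\in L_q$ divergence-free (and $1/p+1/q=1$), Theorem II.1 of \cite{Coif93} applies and yields $u,_{j} v,_{i}-v,_{j} u,_{i}=E\cdot F\in\mathcal{H}^1(\R^n)$ together with
\[
\|u,_{j} v,_{i}-v,_{j} u,_{i}\|_{\mathcal{H}^1}=\|E\cdot F\|_{\mathcal{H}^1}\le C\|E\|_{L_q}\|F\|_{L_p}\le C\|\nabla v\|_{L_q}\|\nabla u\|_{L_p},
\]
which is exactly the asserted bound. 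I do not expect a serious obstacle here: the only points demanding care are the distributional justification that $\dvg E=0$ (the mixed derivatives of $v$ commuting) and the check that $E,F$ carry the integrability needed for the cited theorem to apply verbatim, both of which are routine once one argues with the distributional definitions.
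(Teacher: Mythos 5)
Your proposal is correct and follows exactly the route the paper intends: the paper offers no written proof, stating only that the lemma is a direct consequence of Theorem II.1 in \cite{Coif93}, and your construction of the divergence-free field $E$ from the components of $\nabla v$ paired with the curl-free field $F=\nabla u$ is precisely the standard way to make that deduction explicit. The pointwise bound $|E|\leq|\nabla v|$, the distributional identity $\dvg E = v,_{ij}-v,_{ji}=0$, and the handling of the trivial case $i=j$ are all verified correctly.
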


We recall also some basic facts related to the spectral decomposition of the Laplace operator on a bounded domain $\Omega$ of $R^n$, with smooth boundary. The Laplacian viewed as an unbounded operator from $L_2(\Omega)$ into itself has a discrete spectrum; we denote by $0<\lambda_1<\lambda_2<\ldots<\lambda_n<\ldots$ (with $\lambda_n \to \infty$), its eigenvalues and $\{\phi_k\}_{k= 1}^{\infty}$ the corresponding eigenvectors which form a Hilbert basis of $L_2(\Omega)$. Setting $\mathring{L}^1_2(\Omega)$ to be the completion of $C^{\infty}_0(\Omega)$ with respect to the Dirichlet semi-norm $\|u\|_{L^1_2}^2:= \displaystyle\int |\nabla u|^2$, and $H^{-1}(\Omega)$ to be the dual of $\mathring{L}^1_2(\Omega)$, we have the following classical lemma, which gives us a Hilbert basis of $\mathring{L}^1_2(\Omega)$ and a representation of the norm of $H^{-1}$ by means of the eigenvectors and eigenvalues of the Laplace operator.   
\begin{lemma}\label{L2}
$(\phi_k/\sqrt{\lambda_k})_{k=1}^{\infty}$ is a Hilbert basis of $\mathring{L}^1_2(\Omega)$ and as a direct consequence, we have that, if $f\in H^{-1}(\Omega)$, then
\[
\|f\|^2_{H^{-1}(\Omega)} = \sum_{k=1}^{\infty} f_k^2/\lambda_k,
\]
where $f_k = \langle f,\phi_k\rangle$.
\end{lemma}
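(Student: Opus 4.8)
The plan is to exploit the two different inner products at play---the $L_2(\Omega)$ product under which $\{\phi_k\}$ is orthonormal, and the Dirichlet product $\langle u,v\rangle := \int_\Omega \nabla u\cdot\nabla v\,dx$ that defines $\mathring{L}^1_2(\Omega)$---and to pass between them using the eigenrelations $-\Delta\phi_k=\lambda_k\phi_k$ with $\phi_k$ vanishing on $\partial\Omega$. Since $\Omega$ is bounded, the Poincaré inequality guarantees that the Dirichlet semi-norm is a genuine norm on $C^\infty_0(\Omega)$, so $\mathring{L}^1_2(\Omega)$ is a Hilbert space under $\langle\cdot,\cdot\rangle$; I will use this throughout. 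Note also that each $\phi_k$ lies in $\mathring{L}^1_2(\Omega)$, so all the pairings below are well defined.

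First I would verify orthonormality. Because $\Omega$ has smooth boundary, elliptic regularity makes each $\phi_k$ smooth, and integration by parts together with the boundary condition gives $\int_\Omega \nabla\phi_k\cdot\nabla\phi_l\,dx = -\int_\Omega \phi_l\,\Delta\phi_k\,dx = \lambda_k\int_\Omega \phi_k\phi_l\,dx = \lambda_k\delta_{kl}$, the last equality using the $L_2$-orthonormality of the eigenfunctions. Dividing by $\sqrt{\lambda_k\lambda_l}$ yields $\langle \phi_k/\sqrt{\lambda_k},\,\phi_l/\sqrt{\lambda_l}\rangle = \delta_{kl}$, so the normalised family is orthonormal in $\mathring{L}^1_2(\Omega)$.

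The heart of the argument is completeness, which I regard as the main obstacle: I must rule out that the closed span of $(\phi_k/\sqrt{\lambda_k})$ is a proper subspace. Suppose $u\in\mathring{L}^1_2(\Omega)$ is Dirichlet-orthogonal to every $\phi_k$, that is $\int_\Omega\nabla u\cdot\nabla\phi_k\,dx=0$ for all $k$. Integrating by parts once more---legitimate since $u\in\mathring{L}^1_2(\Omega)$ vanishes on $\partial\Omega$ in the trace sense and $\phi_k$ is smooth---converts this into $\lambda_k\int_\Omega u\,\phi_k\,dx=0$, and since $\lambda_k>0$ we get $\int_\Omega u\,\phi_k\,dx=0$ for all $k$. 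As $u\in\mathring{L}^1_2(\Omega)\subset L_2(\Omega)$ and $\{\phi_k\}$ is a Hilbert basis of $L_2(\Omega)$, this forces $u=0$. Hence the orthonormal family is total, establishing the first assertion.

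For the norm identity I would invoke the Riesz representation theorem for the Hilbert space $\mathring{L}^1_2(\Omega)$: every $f\in H^{-1}(\Omega)=(\mathring{L}^1_2(\Omega))^{*}$ admits a unique representative $v_f\in\mathring{L}^1_2(\Omega)$ with $\langle f,u\rangle=\int_\Omega\nabla v_f\cdot\nabla u\,dx$ for all $u$, and with $\|f\|_{H^{-1}}=\|v_f\|_{L^1_2}$. Expanding $v_f$ in the Hilbert basis from the first part and applying Parseval's identity gives $\|f\|^2_{H^{-1}}=\sum_{k=1}^{\infty} \langle v_f,\,\phi_k/\sqrt{\lambda_k}\rangle^2$. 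Each coefficient equals $\langle f,\,\phi_k/\sqrt{\lambda_k}\rangle=f_k/\sqrt{\lambda_k}$ by the defining property of $v_f$, whence $\|f\|^2_{H^{-1}}=\sum_{k=1}^{\infty} f_k^2/\lambda_k$, as claimed.
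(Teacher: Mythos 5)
Your proof is correct. The paper offers no proof of this lemma at all (it is dismissed as ``classical'' and skipped), and your argument --- orthonormality of $(\phi_k/\sqrt{\lambda_k})$ via integration by parts and the eigenrelation, completeness by reducing Dirichlet-orthogonality to $L_2$-orthogonality against the basis $\{\phi_k\}$, and then the Riesz representation theorem combined with Parseval's identity to convert the dual norm on $H^{-1}(\Omega)$ into the series $\sum_k f_k^2/\lambda_k$ --- is exactly the standard argument the paper is alluding to, with the necessary justifications (Poincar\'e inequality, validity of the integrations by parts) supplied.
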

\begin{proof}
The proof of this result is quiet classical, therefore we skip it.
\end{proof}
\section{Main Theorem}
We now state the main result of this paper.
\begin{theorem}\label{T1}
Let $u$ belonging to the energy class
\[ L_{2,\infty}(Q) \cap W^{1,0}_2(Q), \] such that 
\begin{equation}\label{E3}
\int_Q u\partial_t \phi dz = \int_Q (A\nabla u).\nabla \phi dz \quad \forall \phi \in C^{\infty}_0(Q),
\end{equation}
where $A=a+d$, with $a\in L_{\infty}(Q;\R^{n\times n})$ a symmetric matrix satisfying \[ \nu \mathbb{I}_n \leq  a \leq \nu^{-1} \mathbb{I}_n\] and $d\in L_{\infty}(-1,0;BMO(B;\R^{n\times n}))$ a skew symmetric matrix. Then the following energy identity holds for all $t_0 \in (-1,0)$ and for all test functions $\phi\in C^{\infty}_0(B\times (-1,1))$:
\begin{multline*}
\frac{1}{2}\int_{B} \phi(x,t_0)|u(x,t_0)|^2 dx + \int_{-1}^{t_0}\int_{B}\phi \nabla u.a\nabla u dz = \frac{1}{2} \int_{-1}^{t_0}\int_{B} |u|^2\partial_t \phi dz\\ - \int_{-1}^{t_0}\int_{B} (A\nabla u).\nabla\phi u dz.
\end{multline*}
\end{theorem}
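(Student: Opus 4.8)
The plan is to obtain the energy identity by testing the weak formulation against $\phi u$, but since $u$ lacks the regularity to be an admissible test function directly, I would proceed by mollification in time and space. The natural first step is to regularize $u$ by convolution, producing a smooth approximation $u^\varepsilon$ that still solves a mollified version of the equation, and to test the equation against $\phi u^\varepsilon$ (or against a Steklov-type time average). The term $\int_Q u\,\partial_t(\phi u^\varepsilon)\,dz$ then splits into a piece producing the boundary term $\tfrac12\int_B \phi|u|^2$ at $t_0$ and the piece $\tfrac12\int |u|^2\partial_t\phi$ via the usual integration-by-parts identity $u\,\partial_t u = \tfrac12\partial_t|u|^2$, while the right-hand side produces $\int(A\nabla u)\cdot\nabla(\phi u^\varepsilon)$, which expands into the diffusion term $\int\phi\,\nabla u\cdot A\nabla u$ and the convection-type term $\int(A\nabla u)\cdot\nabla\phi\,u$. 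After establishing the identity for $u^\varepsilon$, I would pass to the limit $\varepsilon\to 0$ using the $W^{1,0}_2$ convergence of $\nabla u^\varepsilon\to\nabla u$ in $L_2$ and the $L_{2,\infty}$ bound to control the boundary term.

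\medskip

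\noindent\emph{Handling the skew-symmetric part.} Because $a$ is symmetric, $\int\phi\,\nabla u\cdot a\nabla u$ is exactly the dissipation term appearing in the statement, and it survives the limit by weak-strong convergence. The subtlety is the decomposition $A=a+d$ with $d$ only in $BMO$, so the quantity $\int\phi\,\nabla u\cdot d\nabla u$ must vanish identically by skew-symmetry; for smooth approximants this is clear since $\nabla u^\varepsilon\cdot d\nabla u^\varepsilon = 0$ pointwise whenever $d$ is skew, but one must verify this cancellation is preserved and does not contribute a spurious term. The genuine danger lies in the mixed term $\int(d\nabla u)\cdot\nabla\phi\,u$, since $d\in BMO$ is not bounded, so $d\nabla u$ need not lie in $L_2$ and the product with $u\nabla\phi$ is not obviously integrable.

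\medskip

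\noindent\emph{The main obstacle.} This is precisely where Lemma~\ref{L1} and the Hardy-space duality enter. The hard part will be to make sense of and control the term involving the skew-symmetric $BMO$ matrix $d$. I expect the argument to rewrite $(d\nabla u)\cdot\nabla(\phi u)$ in terms of div-curl quantities of the form $u,_{j}\,\psi,_{i}-\psi,_{j}\,u,_{i}$ (with $\psi$ built from $\phi u$), which by Lemma~\ref{L1} belong to the Hardy space $\mathcal H^1$, and then to pair them against the entries of $d\in BMO$ via the $\mathcal H^1$–$BMO$ duality of Fefferman. This duality yields the bound $|\langle d,\,\text{div-curl term}\rangle|\le C\|d\|_{BMO}\|\nabla u\|_{L_2}^2$ and, crucially, shows that the skew-symmetric contribution to the dissipation cancels, leaving only the $a$-dissipation on the left. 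The remaining steps—controlling the $t$-dependence of $d\in L_\infty(-1,0;BMO)$ by freezing time and integrating, and justifying the limit passage in the Hardy–$BMO$ pairing as $\varepsilon\to 0$—are technical but follow once the div-curl structure has been exposed.
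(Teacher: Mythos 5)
Your plan takes a genuinely different route from the paper's: the paper never mollifies $u$. Instead, it observes that $u\varphi$ is a distributional solution of the auxiliary problem \eqref{E5} with right-hand side $F-\dvg G \in L_2(-1,0;H^{-1}(B))$, invokes J.-L. Lions' existence--uniqueness theorem to identify $u\varphi$ with the unique Lions solution $w$ (which automatically enjoys $\partial_t w \in L_2(-1,0;H^{-1}(B))$ and $w \in C([-1,0];L_2(B))$), and then tests the equation with $w$ itself; a time cutoff $\chi_\epsilon$ finishes the proof. Your identification of the central analytic obstacle --- the skew-symmetric $BMO$ part, handled via Lemma~\ref{L1} and $\mathcal{H}^1$--$BMO$ duality --- does match the paper's use of the same tool. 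However, two steps of your mollification scheme have genuine gaps, and both trace back to one missing ingredient: the bound $\partial_t u \in L_2(-1,0;H^{-1}(B))$, which is the paper's Proposition~\ref{P1}.

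First, the cross term. Testing with $\phi u^\varepsilon$ gives $\int_Q u\,\partial_t(\phi u^\varepsilon)\,dz = \int_Q u u^\varepsilon\,\partial_t\phi\,dz + \int_Q \phi\,u\,\partial_t u^\varepsilon\,dz$, and the last integral is \emph{not} of the form $u\,\partial_t u = \tfrac12\partial_t|u|^2$, because the two factors are different functions. To symmetrize it you must show $\int_Q \phi\,(u-u^\varepsilon)\,\partial_t u^\varepsilon\,dz \to 0$, and since $\partial_t u^\varepsilon$ is controlled only through the equation, this requires a uniform $L_2(H^{-1})$ bound on $\partial_t u^\varepsilon$ --- precisely Proposition~\ref{P1}, whose proof is itself nontrivial (it needs the same Hardy--$BMO$ estimate plus an elliptic lifting and spectral argument, Lemma~\ref{L2}). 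Your proposal deploys the Hardy-space tool only on the drift terms and never supplies this bound; without it the ``usual integration-by-parts identity'' step fails. (The Steklov variant does not escape this: there the averaged flux $[A\nabla u]_h$ pairs with $\nabla(\phi u_h)$ at two different times, so the skew-symmetry cancellation is no longer pointwise and must again be run through the bilinear estimate of Lemma~\ref{L1}.) Second, the theorem asserts the identity for \emph{all} $t_0 \in (-1,0)$, but pointwise-in-time values of a function that is merely $L_{2,\infty}$ are undefined; a mollification argument by itself yields the identity only at Lebesgue points, i.e.\ for a.e.\ $t_0$. The paper closes this through Remark~\ref{R1}: $u\varphi \in L_2(-1,0;\mathring{L}^1_2(B))$ together with $\partial_t(u\varphi) \in L_2(-1,0;H^{-1}(B))$ gives $u\varphi \in C([-1,0];L_2(B))$, which singles out the continuous representative for which the boundary term at $t_0$ makes sense. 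Once you prove Proposition~\ref{P1}, your route can be completed; but at that point the paper's shortcut --- uniqueness in the Lions class forces $w = u\varphi$, so one may test with $w$ directly --- makes all commutator estimates unnecessary.
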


\section{Proof of Theorem~\ref{T1}}
The method we use for this proof are due to Seregin, in his lecture notes:"Parabolic Equations".\\
We start by proving a simple regularity result for the time derivative of $u$ defined as in Theorem~\ref{T1}. 
\begin{proposition}\label{P1}
Let $u$ defined as in Theorem~\ref{T1}. Then 
\[ \partial_t u \in L_2(-1,0;H^{-1}(B)) \]
\end{proposition}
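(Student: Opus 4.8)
The plan is to show that for almost every $t\in(-1,0)$ the time slice $\partial_t u(\cdot,t)$ defines a bounded linear functional on $\mathring{L}^1_2(B)$ whose operator norm is square-integrable in $t$. First I would insert into the weak formulation~\eqref{E3} test functions of the split form $\phi(x,t)=\eta(t)\psi(x)$, with $\eta\in C^{\infty}_0(-1,0)$ and $\psi\in C^{\infty}_0(B)$. Since $\partial_t(\eta\psi)=\eta'\psi$ and $\nabla(\eta\psi)=\eta\nabla\psi$, identity~\eqref{E3} becomes $\int_{-1}^{0}\eta'(t)\big(\int_B u\psi\,dx\big)\,dt=\int_{-1}^{0}\eta(t)\big(\int_B (A\nabla u)\cdot\nabla\psi\,dx\big)\,dt$, which says precisely that $t\mapsto\int_B u(\cdot,t)\psi\,dx$ has weak derivative $-\int_B (A\nabla u(\cdot,t))\cdot\nabla\psi\,dx$. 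Hence $\langle\partial_t u(\cdot,t),\psi\rangle=-\int_B (A\nabla u(\cdot,t))\cdot\nabla\psi\,dx$, and the whole task reduces to the pointwise-in-time estimate $\big|\int_B (A\nabla u(\cdot,t))\cdot\nabla\psi\,dx\big|\le C(t)\,\|\nabla\psi\|_{L_2(B)}$ with $C\in L_2(-1,0)$.

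Next I would split $A=a+d$ and treat the two pieces separately. The elliptic part is immediate from Cauchy--Schwarz: $\big|\int_B (a\nabla u)\cdot\nabla\psi\,dx\big|\le\|a\|_{L_\infty}\|\nabla u(\cdot,t)\|_{L_2(B)}\|\nabla\psi\|_{L_2(B)}$. The genuinely delicate term is $\int_B (d\nabla u)\cdot\nabla\psi\,dx=\int_B d_{ij}\,u_{,j}\,\psi_{,i}\,dx$, since $d$ only lies in $BMO$ and is not bounded. Here I would exploit skew-symmetry: because $d_{ij}=-d_{ji}$, relabeling indices gives $\int_B d_{ij}u_{,j}\psi_{,i}\,dx=\tfrac12\int_B d_{ij}\,(u_{,j}\psi_{,i}-u_{,i}\psi_{,j})\,dx$, which exhibits exactly the div--curl (Jacobian) structure of Lemma~\ref{L1}. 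To legitimately apply that lemma on $\R^n$ I would replace $u$ by $\zeta u$ for a cutoff $\zeta\in C^{\infty}_0(B)$ equal to $1$ on $\operatorname{supp}\psi$, so that the integrand is unchanged while $\zeta u\in W^1_2(\R^n)$; Lemma~\ref{L1} then bounds the Hardy-space norm of $(\zeta u)_{,j}\psi_{,i}-(\zeta u)_{,i}\psi_{,j}$ by $C\|\nabla(\zeta u)\|_{L_2}\|\nabla\psi\|_{L_2}\le C(\|\nabla u(\cdot,t)\|_{L_2(B)}+\|u(\cdot,t)\|_{L_2(B)})\|\nabla\psi\|_{L_2(B)}$. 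Finally, after extending $d(\cdot,t)$ to a $BMO(\R^n)$ matrix of comparable norm, the $\mathcal{H}^1$--$BMO$ duality of Fefferman--Stein yields $\big|\int_B d_{ij}(u_{,j}\psi_{,i}-u_{,i}\psi_{,j})\,dx\big|\le C\|d(\cdot,t)\|_{BMO}\,(\|\nabla u(\cdot,t)\|_{L_2(B)}+\|u(\cdot,t)\|_{L_2(B)})\,\|\nabla\psi\|_{L_2(B)}$.

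Combining the two pieces gives $\|\partial_t u(\cdot,t)\|_{H^{-1}(B)}\le C(t)$ with $C(t)=C\,(1+\|d(\cdot,t)\|_{BMO})(\|\nabla u(\cdot,t)\|_{L_2(B)}+\|u(\cdot,t)\|_{L_2(B)})$. It remains only to check square-integrability in time: the membership $u\in W^{1,0}_2(Q)$ gives $\|\nabla u(\cdot,t)\|_{L_2(B)}\in L_2(-1,0)$, while $u\in L_{2,\infty}(Q)$ gives $\|u(\cdot,t)\|_{L_2(B)}\in L_\infty(-1,0)\subset L_2(-1,0)$, and $\|d(\cdot,t)\|_{BMO}$ is bounded since $d\in L_\infty(-1,0;BMO(B))$; hence $C\in L_2(-1,0)$ and $\partial_t u\in L_2(-1,0;H^{-1}(B))$, as claimed. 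I expect the main obstacle to be precisely the $d$-term: the coefficient's failure to be bounded forces one to recover integrability from the algebraic skew-symmetry through the compensated-compactness mechanism, and some care is needed in localizing $u$ and extending $d$ so that both Lemma~\ref{L1} and the $\mathcal{H}^1$--$BMO$ duality genuinely apply on all of $\R^n$.
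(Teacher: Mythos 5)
Your proof is correct, and its analytic core coincides with the paper's: you tame the drift term by using skew-symmetry of $d$ to produce the div--curl structure, Lemma~\ref{L1} to place it in $\mathcal{H}^1(\R^n)$, Jones' theorem to extend $d$, and $\mathcal{H}^1$--$BMO$ duality --- this is exactly the estimate of the term $A_2$ in Step~1 of the paper's proof of Proposition~\ref{P1}. Where you genuinely depart is in the surrounding functional analysis. The paper never writes $\partial_t u(\cdot,t)$ as an explicit functional; instead it introduces an auxiliary potential $U(\cdot,t)\in\mathring{L}^1_2(B)$ solving the Dirichlet problem \eqref{E2}, rewrites \eqref{E3} as $\int_Q u\,\partial_t\phi\,dz=\int_Q\nabla U\cdot\nabla\phi\,dz$, expands $u=\sum_k d_k(t)\phi_k$ and $U=\sum_k b_k(t)\phi_k$ in the Laplacian eigenbasis, derives $d_k'=-\lambda_k b_k$, and then bounds $\partial_t u=\sum_k d_k'\phi_k$ through the spectral representation of the $H^{-1}$ norm from Lemma~\ref{L2}. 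You bypass both the potential $U$ and the spectral decomposition: testing \eqref{E3} with separated functions $\eta(t)\psi(x)$ identifies $\partial_t u(\cdot,t)$, for a.e.\ $t$, with the functional $\psi\mapsto-\int_B(A\nabla u(\cdot,t))\cdot\nabla\psi\,dx$, and the proposition reduces to the slice-wise $H^{-1}(B)$ bound plus square-integrability in $t$. Your route is shorter and dispenses with \eqref{E2} and Lemma~\ref{L2} altogether; its only cost is that placing $\partial_t u$ in the Bochner space $L_2(-1,0;H^{-1}(B))$ deserves a word on measurability (the $H^{-1}(B)$-valued map is weakly measurable, hence strongly measurable by Pettis' theorem and separability, and separated test functions suffice to determine an $H^{-1}$-valued distribution), a point the paper's eigenfunction expansion and final duality estimate render concrete. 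Two harmless discrepancies: you localize $u$ by a cutoff $\zeta$ rather than extending it to $\R^n$ as the paper does, which is why your bound carries the extra term $\|u(\cdot,t)\|_{L_2(B)}$ --- absorbed by $u\in L_{2,\infty}(Q)$, so no damage --- and, to be safe, take $\zeta\equiv1$ on a neighbourhood of $\operatorname{supp}\psi$ so that $\nabla\zeta$ vanishes wherever $\nabla\psi\neq0$.
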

\begin{proof}
\textbf{Step 1.} Let us set 
\[ g(x,t) = A(x,t)\nabla u (x,t), \] and consider the problem
\begin{equation}\label{E2}
\begin{cases}
-\Delta U (\cdot,t) = \dvg g (\cdot,t)\quad \mbox{for a.e $t\in (-1,0)$}\\
U(\cdot,t)|_{\partial B} = 0.
\end{cases}
\end{equation}
Let $v \in C^{\infty}_0(B)$, we have:
\begin{align*}
\int_B \dvg g(\cdot,t)v dx &= - \int_B g(\cdot,t).\nabla v dx\\
&= -\int_B (a\nabla u).\nabla v dx - \int_B (d\nabla u).\nabla v dx\\
&=: A_1 + A_2.
\end{align*}
We have by a straightforward computation that 
\[ |A_1| \leq \|a\|_{L_{\infty}(Q)}\|\nabla u(\cdot,t) \|_{L_2(B)}\|\nabla v \|_{L_2(B)} \quad \mbox{for a.e $t\in (-1,0)$}.\]
On the other hand, we have thanks to the skew symmetry of $d$, that $A_2$ can be rewritten as follows
\[ -A_2 =  \frac{1}{2} \sum_{i,j=1}^n\int_B d_{ij}(u,_j v,_i - v,_j u,_i) dx. \] Denote by $\bar{u}$ the extension of $u$ from $B$ to $\R^n$ such that
\[ \|\bar{u}(\cdot,t)\|_{W^1_2(\R ^n)} \leq c \|u(\cdot,t)\|_{W^1_2(B)} \quad \mbox{for a.e $t\in (-1,0)$}, \] where $c$ depends only on $n$. Similarly, let us denote by $\bar{d}$ the extension of $d$ from $B$ to $\R^n$ such that
\[ \|\bar{d}(\cdot,t)\|_{BMO(\R^n;\R^{n\times n})} \leq c\|d(\cdot,t)\|_{BMO(B;\R^{n\times n})} \quad \mbox{for a.e $t\in (-1,0)$},\] where, again, $c$ depends only on $n$. In the later case, to construct such an extension, one can use a reflection on the boundary (See, e.g., Theorem 2 in \cite{Jon80}, where this is done for very general domains $\Omega \subset \R^n$). Therefore, because $v$ is compactly supported in $B$, we have that 
\[ -A_2 =  \frac{1}{2} \sum_{i,j=1}^n\int_{\R^n} \bar{d}_{ij}(\bar{u},_j v,_i - v,_j \bar{u},_i) dx. \] We have from Lemma~\ref{L1} that $\bar{u},_j v,_i - v,_j \bar{u},_i \in \mathcal{H}^1(\R^n)$ with \[ \|\bar{u},_j v,_i - v,_j \bar{u},_i\|_{\mathcal{H}^1(\R^n)} \leq C \|\nabla \bar{u}\|_{L_2(\R^n)} \|\nabla v\|_{L_2(\R^n)}, \] and since $BMO(\R^n)$ is the dual of the Hardy space $\mathcal{H}^1(\R^n)$, we derive that 
\[
|A_2| \leq C\|\bar{d}\|_{L_{\infty}(-1,0;BMO(\R^n;\R^{n\times n}))} \|\nabla \bar{u}\|_{L_2(\R^n)} \|\nabla v\|_{L_2(\R^n)},
\]
and a fortiori 
\[
|A_2| \leq C\|d\|_{L_{\infty}(-1,0;BMO(B;\R^{n\times n}))} \|\nabla u\|_{L_2(B)} \|\nabla v\|_{L_2(B)},
\]
(with $C$ depending only on $n$). Hence, we have that $\dvg g(\cdot,t) \in H^{-1}(B)$, with 
\[ \|\dvg g(\cdot,t)\|_{H^{-1}(B)} \leq C(n,a,d)\|\nabla u(\cdot,t)\|_{L_2(B)} \quad \mbox{for a.e $t\in (-1,0)$}.\] Therefore, there exists a unique $U(\cdot,t)\in \mathring{L}^1_2(B)$ which solves \eqref{E2} and such that \[ \|\nabla U(\cdot,t)\|_{L_2(B)} \leq C(n,a,d)\|\nabla u(\cdot,t)\|_{L_2(B)} \quad \mbox{for a.e $t\in (-1,0)$}.\] We also deduce that $\nabla U \in L_2(Q)$ and 
\[ \|\nabla U\|_{L_2(Q)} \leq C(n,a,d)\|\nabla u\|_{L_2(Q)}. \]
\textbf{Step 2.} Now, we can rewrite \eqref{E3} as follows
\begin{equation}\label{E4}
\int_Q u\partial_t \phi dz = \int_Q \nabla U. \nabla \phi dz \quad \forall \phi \in C^{\infty}_0(Q).
\end{equation}
By a density arguments, we can test \eqref{E4} with functions $\phi(x,t) = \chi(t)\phi_k(x)$, where $\chi \in C^{\infty}_0(-1,0)$ and $\phi_k$ is an eigenfunction (introduced in the second part of the preliminaries section; here we choose $\Omega=B$). Since $(\phi_k)_{k=1}^{\infty}$ is a Hilbert basis of $L_2(B)$, we can write $u$ as follows
\[ u(x,t) = \sum_{k=1}^{\infty} d_k(t)\phi_k(x),\]where $d_k(t) = \int_B u(\cdot,t)\phi_k dx$; we also have
\[ U(x,t) = \sum_{k=1}^{\infty} b_k(t)\phi_k(x),\]where $b_k(t) = \int_B U(\cdot,t)\phi_k dx$. So we have, thanks to Lemma~\ref{L2}, that
\[ \|\nabla U\|^2_{L_2(Q)}=\int_{-1}^0 \sum_{k=1}^{\infty} b_k^2(t)\lambda_k dt \leq C(n,a,d)\|\nabla u\|^2_{L_2(Q)} < \infty .\]
We have now
\begin{align*}
\int_{-1}^0 d_k(t) \chi'(t) dt &= \int_{-1}^0\chi(t) \int_B \nabla U.\nabla \phi_k dx dt\\ 
&= \int_{-1}^0 \chi(t) b_k(t)\lambda_k dx dt.
\end{align*}
So, $d_k'(t) = -b_k(t) \lambda_k$ and we derive that
\[ \partial_t u(x,t) = \sum_{k=1}^{\infty} d_k'(t) \phi_k(x), \] where the convergence of this sum occurs in the space of distributions; thus we have, for every $w \in C^{\infty}_0(B)$ and $\chi\in C^{\infty}_0(-1,0)$, that 
\begin{align*}
\int_{-1}^0 \langle\partial u(\cdot,t), w\rangle\chi(t) dt &= -\lim_{N \to \infty} \int_{-1}^0\sum_{k=1}^N b_k(t)\lambda_k \int_B \phi_k(x) w(x) dx \chi(t) dt\\
&= \lim_{N \to \infty} \int_{-1}^0\sum_{k=1}^N b_k(t) \int_B \nabla\phi_k(x).\nabla w(x) dx \chi(t) dt\\
&\leq \|\nabla w\|_{L_2(B)} \int_{-1}^0\left(\sum_{k=1}^{\infty} b_k^2(t) \lambda_k\right)^{1/2}|\chi(t)| dt \quad \mbox{by Lemma~\ref{L2}}\\
&\leq c(n,a,d)\|\nabla u\|_{L_2(Q)}\|\chi\|_{L_2(-1,0)}\|\nabla w\|_{L_2(B)},
\end{align*}
and the statement follows.
\end{proof}

\begin{remark}\label{R1}
Let $\varphi \in C^{\infty}_0(B)$. Then, we readily deduce from the above proposition that
\[
\partial_t(u \varphi) \in L_2(-1,0;H^{-1}(B)).
\]
Since obviously $u \varphi \in L_2(-1,0;\mathring{L}^1_2(B))$, we conclude that \[ u\varphi \in C([-1,0];L_2(B)). \]
\end{remark}
\begin{proof}[Proof of Theorem~\ref{T1}]
\textbf{Step 1.} Consider the following auxiliary equation
\begin{equation}\label{E5}
\begin{cases}
\partial_t w - \dvg(A\nabla w) = F - \dvg G\quad \mbox{in } B\times (-1,0),\\ 
\qquad \qquad \quad w|_{\partial' Q} = 0, 
\end{cases}
\end{equation}
where $F=u \partial_t \varphi - (A \nabla u).\nabla \varphi$ and $G = u(A\nabla \varphi)$. We have that the distribution $F-\dvg G$ belongs to $L_2(-1,0;H^{-1}(B))$. This is a direct consequence of the fact that $u\varphi$ is distributional solution of \eqref{E5} together with Remark~\ref{R1} and Step 1 of the proof of Proposition~\ref{P1}. But, for our purpose we are interested, more precisely, in the bounds of the terms which appear in the definition of $F-\dvg G$ and which belong to $L_2(-1,0;H^{-1}(B))$. Therefore let us consider a function $w \in C^{\infty}_0(B)$; then for a.e $t\in (-1,,0)$
\begin{align*}
\int_B F w dx + \int_B G.\nabla w dx &= \int_B u \partial_t \varphi w dx - \int_B (a\nabla u ).\nabla \varphi w dx - \int_B (d\nabla u ).\nabla \varphi w dx\\ &\mathrel{\phantom{=}}+ \int_B (a\nabla \varphi). \nabla w u dx + \int_B (d\nabla \varphi). \nabla w u dx\\
&= J_1 + J_2 + J_3,
\end{align*}
where
\[ J_1 := \int_B u \partial_t \varphi w, \]
\[ J_2 := - \int_B (a\nabla u ).\nabla \varphi w dx + \int_B (a\nabla \varphi). \nabla w u dx,\]
\[ J_3 := - \int_B (d\nabla u ).\nabla \varphi w dx + \int_B (d\nabla \varphi). \nabla w u dx. \]
We rewrite the term $J_3$ as follows
\begin{align*}
J_3 &= - \int_B d\nabla (u w) .\nabla \varphi dx + \int_B (d\nabla w). \nabla \varphi u dx+ \int_B (d\nabla \varphi). \nabla w u dx\\
&= - \int_B d\nabla (u w) .\nabla \varphi dx \quad \mbox{(by the skew symmetry of $d$)}.
\end{align*}
But again, thanks to the skew symmetry of $d$, we have that 
\[
J_3 = -\frac{1}{2}\sum_{i,j=1}^n \int_B b_{i,j}\left[ (u w),_j \phi,_i - \phi,_j (u w),_i \right] dx
\]
and making the same computations as for $A_2$ in Step 1 of the proof of Proposition~\ref{P1}, with the only difference being that we keep $p$ arbitrary (instead of choosing $p=2$ as in the proof of Proposition~\ref{P1}), we obtain
\[
|J_3| \leq C(n,a,d)\|\nabla(uw)\|_{L_p(B)} \|\varphi\|_{L_{p/(p-1)}(B)},
\]
(for $1<p<\infty$ to be suitably chosen in function of $n$) which implies that
\[
|J_3| \leq C(n,a,d,\varphi)\left( \|w\nabla u\|_{L_p(B)}+\|u\nabla w\|_{L_p(B)} \right).
\]
If $\mathbf{n\geq 3}$, we steadily have for 
\[ 1< p< \min(2,\frac{n}{n-1}), \]
that (here $2^* = \frac{2n}{n-2}$)
\begin{align*}
\|w\nabla u\|_{L_p(B)}+\|u\nabla w\|_{L_p(B)} &\leq c(n)\left[ \|w\|_{L_{2^*}(B)}\|\nabla u\|_{L_2(B)} + \|u\|_{L_{2^*}(B)}\|\nabla w\|_{L_2(B)}\right]\\
&\leq c(n) \left( \|\nabla u\|_{L_2(B)} + \|u\|_{L_{2}(B)}\right) \|\nabla w\|_{L_2(B)},
\end{align*}
where Sobolev embedding and Poincar\'{e}'s inequality are used in the last estimate.\\
\\
The case $\mathbf{n=2}$ is a straightforward adaptation of the previous (since $H^1(B)$ embeds continuously in every $L_s(B)$, $1\leq s<\infty$), whereas for the case $\mathbf{n=1}$, we take $p=2$, use the fact that $H^1(B)$ is continuously embedded in $L_{\infty}(B)$ and Poincar\'{e}'s inequality for the term in $w$. \\ \\
Next, we have the following easy bound for the terms $J_1$ and $J_2$:
\[ |J_1 + J_2| \leq C(n,\varphi) \left( \|\nabla u\|_{L_2(B)} + \|u\|_{L_{2}(B)}\right) \|\nabla w\|_{L_2(B)}. \]
So in conclusion, we have, for a.e $t\in (-1,0)$ that
\[
\|F(\cdot,t) - \dvg G(\cdot,t)\|_{H^{-1}(B)} \leq C(n,a,d,\varphi) \left( \|\nabla u(\cdot,t)\|_{L_2(B)} + \|u(\cdot,t)\|_{L_{2}(B)}\right),
\]
and we get a fortiori
\begin{equation}\label{E6}
\|F - \dvg G\|_{L_2(-1,0;H^{-1}(B))} \leq C(n,a,d,\varphi) \left( \|\nabla u\|_{L_2(Q)} + \|u\|_{L_{2,\infty}(Q)}\right)
\end{equation}
\\ \\ \textbf{Step 2.} Let us now tackle the question of well-posedness of \eqref{E5}. Consider the time-indexed family of bilinear forms
\[ \delta_t(w,v) := \int_B (A\nabla w).\nabla v dx. \]
Let us first notice that the map $t\in (-1,0)\mapsto \delta_t(w,v)$ is measurable for every $w,v \in \mathring{L}^1_2(B)$. Furthermore, we have by similar computations as those made in Step 1 in the proof of Proposition~\ref{P1}, that there exists a constant $C= C(n,a,d) >0$ independent of $t$ such that  
\[ |\delta_t (w,v)| \leq C\|\nabla w\|_{L_2(B)}\|\nabla v\|_{L_2(B)}, \]
for all $w,v \in \mathring{L}^1_2(B)$ i.e $\delta_t$ is a bounded bilinear operator on $\mathring{L}^1_2(B)$. We have, additionally, the following coercivity estimate
\begin{align*}
\phantom{{}\geq{}}
\delta_t (w,w) &= \int_B (A\nabla w).\nabla w dx\\
&= \int_B (a\nabla w).\nabla w dx\quad \mbox{(by the skew symmetry of $d$),}\\
&\geq \nu \int_B |\nabla w|^2 dx.
\end{align*}
In view of these previous estimates and the regularity proved for the right-hand side of \eqref{E5} and considering the evolution triple $\mathring{L}^1_2(B) \subset L_2(B) \subset H^{-1}$, we have by applying J-L. Lions abstract theorem for well-posedness of evolution equations (see e.g.,\cite{J-L65}, Theorem 4.1, Chapter 3, section 4) that there exists a unique solution 
\[ w\in  C([-1,0];L_2(B))\cap L_2(-1,0;\mathring{L}^1_2(B)), \] with \[ \partial_t w \in L_2(-1,0;H^{-1})\]
such that
\begin{equation}\label{E7}
\int_Q \partial_t w v dz + \int_Q (A\nabla w).\nabla v dz = \int_Q (F-\dvg G)v dz
\end{equation}
for any $v\in C^{\infty}_0(Q)$. Let us notice that, from Remark~\ref{R1}, the fact that $u\varphi$ is a distributional solution of \eqref{E5} and by the above uniqueness result:
\[ w = u\varphi. \]
On another hand, by the regularity obtained for $w$, we can extend identity \eqref{E7} to functions $v$ in $L_2(-1,0;\mathring{L}^1_2(B))$ and therefore, test \eqref{E7} with $w$ itself. Thus, we get
\begin{equation}\label{E8}
\int_Q (A \nabla w). \nabla w dz = \int_Q (F-\dvg G)w dz.
\end{equation}
Denote by $L$, the left-hand side of the above identity. By the skew symmetry of $d$, we obtain that
\[ L = \int_Q (a \nabla w). \nabla w dz; \] therefore coming back to $u$, we get
\begin{align*}
L &= \int_Q a  (\varphi \nabla u + u\nabla \varphi). (\varphi \nabla u + u\nabla \varphi) dz \\
&= \int_Q \varphi^2 (a\nabla u).\nabla u dz + 2\int_Q u \varphi (a \nabla u).\nabla \varphi dz + \int_Q u^2 (a\nabla \varphi).\nabla \varphi.
\end{align*}
Now, denote by $R$ the right hand side of \eqref{E8}; we easily obtain that 
\begin{align*}
R &= \int_Q u\partial_t \varphi w dz - \int_Q (a\nabla u).\nabla \varphi w dz + \int_Q (a\nabla \varphi).\nabla w u - \int_Q d \nabla (u w). \nabla \varphi dz\\
&=  \int_Q u^2 \varphi \partial_t \varphi dz - \int_Q u\varphi(a\nabla u).\nabla \varphi dz + \int_Q \varphi u (a\nabla u).\nabla \varphi dz + \int_Q u^2 (a\nabla \varphi).\nabla \varphi dz\\&\mathrel{\phantom{=}} -2 \int_Q u \varphi (d \nabla u).\nabla \varphi dz.
\end{align*}
Therefore, \eqref{E8} implies that 
\begin{equation}\label{E9}
\int_Q \varphi^2 (a \nabla u).\nabla u dz + 2\int_Q u \varphi (a\nabla u).\nabla \varphi dz = \int_Q u^2 \varphi \partial_t \varphi dz - 2\int_Q u \varphi (d \nabla u).\nabla \varphi dz, 
\end{equation}
for all $\varphi \in C^{\infty}_0(Q)$. Let us notice that all the integrals in the above identity are finite, especially the last one of the right hand side of \eqref{E9}. To see this we rewrite 
\[ \int_Q u \varphi (d \nabla u).\nabla \varphi dz = \int_Q d\nabla(u^2/2).\nabla (\varphi^2/2) dz \]and use the same method as in the estimation of $J_3$ in the previous step. \\ \\
\textbf{Step 3.} Now, we choose $\varphi(x,t) = \chi_\epsilon(t) \phi(x,t)$, where $\phi \in C^{\infty}_0(B\times (-1,1))$ and $\chi_\epsilon(t) = 1$ if $t\leq t_0 -\varepsilon$, $\chi_\epsilon(t) = (t_0+\epsilon-t)/(2\epsilon)$, if $t_0-\epsilon < t< t_0+\epsilon$, and $\chi_\epsilon(t)=0$ when $t\geq t_0 +\epsilon$, with $t_0 \in (-1,0)$. Therefore passing to the limit $\epsilon \to 0$ in \eqref{E9},  we have that Theorem~\ref{T1} is proved.

\end{proof}
\paragraph{Acknowledgement}
This work was supported by the Engineering and Physical Sciences Research Council [EP/L015811/1]. The author would like to thank Siran Li and Ghozlane Yahiaoui for their careful reading and insightful discussion. 

\end{document}